\setlist[enumerate,1]{label=(\roman*)}
\numberwithin{equation}{section}
\declaretheoremstyle[
  shaded={bgcolor=\thmcolor}
]{plain}
\declaretheoremstyle[
  headfont=\normalfont\bfseries,
  bodyfont=\normalfont,
  shaded={bgcolor=\defcolor}
]{noital}
\declaretheoremstyle[
  headfont=\normalfont\bfseries,
  bodyfont=\normalfont,
]{noital}
\declaretheorem[style=plain,numberwithin=section,name=Theorem]{theorem}
\declaretheorem[style=plain,sibling=theorem,name=Lemma]{lemma}
\declaretheorem[style=plain,sibling=theorem,name=Conjecture]{conjecture}
\declaretheorem[style=plain,sibling=theorem,name=Claim]{claim}
\declaretheorem[style=plain,sibling=theorem,name=Question]{question}
\declaretheorem[style=plain,numbered=no,name=Theorem]{theorem-n}
\declaretheorem[style=plain,numbered=no,name=Proposition]{proposition-n}
\declaretheorem[style=plain,numbered=no,name=Lemma]{lemma-n}
\declaretheorem[style=plain,numbered=no,name=Corollary]{corollary-n}
\declaretheorem[style=plain,numbered=no,name=Conjecture]{conjecture-n}
\declaretheorem[style=plain,numbered=no,name=Claim]{claim-n}
\declaretheorem[style=plain,numbered=no,name=Fact]{fact-n}
\declaretheorem[style=plain,numbered=no,name=Open Problem]{openproblem-n}
\declaretheorem[style=plain,numbered=no,name=Question]{question-n}
\declaretheorem[style=plain,numbered=no,name=Observation]{observation-n}
\declaretheorem[style=noital,sibling=theorem,name=Remark]{remark}
\declaretheorem[style=noital,sibling=theorem,name=Definition]{definition}
\declaretheorem[style=noital,numbered=no,name=Remark]{remark-n}
\declaretheorem[style=noital,numbered=no,name=Definition]{definition-n}
\declaretheorem[style=noital,numbered=no,name=Construction]{construction-n}
\declaretheorem[style=noital,numbered=no,name=Example]{example-n}
\newcommand{\defined}{\mathrel{\coloneqq}}
\newcommand{\st}{\mathbin{\colon}}
\DeclarePairedDelimiter{\set}{\lbrace}{\rbrace}
\newcommand{\emptyset}{\varnothing}
\newcommand{\union}{\mathbin{\cup}}
\newcommand{\inter}{\mathbin{\cap}}
\newcommand{\from}{\colon}
\DeclarePairedDelimiterX{\abs}[1]
  {\lvert}{\rvert}{\ifblank{#1}{\,\cdot\,}{#1}}
\DeclarePairedDelimiterX{\norm}[1]
  {\lVert}{\rVert}{\ifblank{#1}{\,\cdot\,}{#1}}
\DeclarePairedDelimiterX{\inner}[2]
  {\langle}{\rangle}{\ifblank{#1}{\,\cdot\,}{#1},\ifblank{#2}{\,\cdot\,}{#2}}
\DeclareMathDelimiter{\given}
  {\mathbin}{symbols}{"6A}{largesymbols}{"0C}
\DeclareMathOperator{\Prob}{\mathbb{P}}
\DeclarePairedDelimiterXPP{\prob}[1]
  {\Prob}{\lparen}{\rparen}{}
  {\renewcommand{\given}{\nonscript\;\delimsize\vert\nonscript\;\mathopen{}}#1}
\DeclareMathOperator{\Expec}{\mathbb{E}}
\DeclarePairedDelimiterXPP{\expec}[1]
  {\Expec}{\lparen}{\rparen}{}
  {\renewcommand{\given}{\nonscript\;\delimsize\vert\nonscript\;\mathopen{}}#1}
\DeclareMathOperator{\Var}{Var}
\DeclarePairedDelimiterXPP{\var}[1]
  {\Var}{\lparen}{\rparen}{}
  {\renewcommand{\given}{\nonscript\;\delimsize\vert\nonscript\;\mathopen{}}#1}
\DeclareMathOperator{\Cov}{Cov}
\DeclarePairedDelimiterXPP{\cov}[2]
  {\Cov}{\lparen}{\rparen}{}{#1,#2}
\newcommand{\eps}{\varepsilon}
\newcommand{\sseq}{\subseteq}
\newcommand{\RR}{\mathbb{R}}
\newcommand{\ZZ}{\mathbb{Z}}
\newcommand{\cO}{\mathcal{O}}
\newcommand{\ga}{\alpha}
\newcommand{\wavg}{\overline{w_M}}
\newcommand{\width}{\text{width}}
\begin{document}

\title{A uniform bound on almost colour-balanced perfect matchings in colour-balanced cliques}

\author{Lawrence Hollom \footnote{\href{mailto:lh569@cam.ac.uk}{lh569@cam.ac.uk}, Department of Pure Mathematics and Mathematical Statistics (DPMMS), University of Cambridge, Wilberforce Road, Cambridge, CB3 0WA, United Kingdom}}



\maketitle

\begin{abstract}
  An edge-colouring of a graph $G$ is said to be \emph{colour-balanced} if there are equally many edges of each available colour.
  We are interested in finding a colour-balanced perfect matching within a colour-balanced clique $K_{2nk}$ with a palette of $k$ colours.
  While it is not necessarily possible to find such a perfect matching, one can ask for a perfect matching as close to colour-balanced as possible. In particular, for a colouring $c:E(K_{2nk})\rightarrow [k]$, we seek to find a perfect matching $M$ minimising $f(M)\defined \sum_{i=1}^k\bigl||c^{-1}(i)\cap M|-n\bigr|$.

  The previous best upper bound, due to Pardey and Rautenbach, was $\min f(M)\leq \mathcal{O}(k\sqrt{nk\log k})$.
  We remove the $n$-dependence, proving the existence of a matching $M$ with $f(M)\leq 4^{k^2}$ for all $k$.
\end{abstract}


\section{Introduction}
\label{sec:intro}

The problem of finding colour-balanced perfect matchings can be considered a special case of zero-sum Ramsey theory, which has received significant study in recent years.
While work was initially concerned with zero-sum embeddings over finite groups, Caro and Yuster \cite{caro2015zero} initiated the study of zero-sum embeddings over $\ZZ$.
Several variants of this problem have since been studied; see for example \cite{caro2019bounded, caro2019k4}.

Embedding perfect matchings in this context was first considered by Caro, Hansberg, Lauri, and Zarb \cite{caro2020zero}.
They asked whether every two-edge-colouring of a clique on $4n$ vertices with equally many edges of each colour also contains a perfect matching with equally many edges of each colour.
This question was solved, affirmatively and independently, by Ehard, Mohr, and Rautenbach \cite{ehard2020perfect}, and by Kittipassorn and Sinsap \cite{kittipassorn2023perfect}.
Beyond solving this problem, Kittipassorn and Sinsap asked about the generalisation of this problem to more than two colours.
In particular, they \cite{kittipassorn2023perfect} asked whether every colour-balanced $k$ edge-colouring of the clique $K_{2kn}$ admits a colour-balanced perfect matching.
By calling a colouring \emph{colour-balanced}, we mean that there are equally many edges of each colour (as is standard), and we also use the notation $[k]\defined \set{1,2,\dotsc,k}$.

Pardey and Rautenbach \cite{pardey2022matchings} resolved this question in the negative, giving a colour-balanced three-colouring of $K_6$ which admits no perfect matching with one edge of each colour.
However, they also relaxed the question to ask merely for an almost colour-balanced perfect matching, and proposed the following conjecture.
\begin{conjecture}[{\cite[Conjecture 1]{pardey2022matchings}}]
\label{conj:pardey}
    If $n$ and $k$ are positive integers, and $c\from E(K_{2kn})\to [k]$ is colour-balanced, then there is a perfect matching $M$ of $K_{2kn}$ with
    \begin{align}
        f(M)\leq \cO(k^2),
    \end{align}
    where
    \begin{align}
        f(M)\defined \sum_{i=1}^k\Bigl|\abs{c^{-1}(i)\inter M}-n\Bigr|.
    \end{align}
\end{conjecture}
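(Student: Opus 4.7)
The plan is to use an iterative local-search argument. Write $d_i \defined \abs{c^{-1}(i)\inter M}-n$, so that $\sum_i d_i=0$ and $f(M)=\sum_i\abs{d_i}$; call a colour $i$ \emph{excess} if $d_i>0$ and \emph{deficit} if $d_i<0$, with $S^+$ and $S^-$ denoting the respective sets. Starting from an arbitrary perfect matching $M_0$, the goal is to show that so long as $f(M)$ exceeds the target bound $Ck^2$, there exists a short local modification of $M$ that strictly decreases $f(M)$; since $f$ takes non-negative integer values, such an iteration must terminate at a matching meeting the bound. The simplest local move is a 2-swap: given matching edges $u_1v_1, u_2v_2\in M$, replace them with either $u_1u_2, v_1v_2$ or $u_1v_2, v_1u_2$. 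Such a swap decrements two colour-counts and increments two others, and strictly decreases $f$ whenever the removed colours are both excess and the added colours are both deficit (with a small amount of care when some $d_i=0$).

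The central technical lemma to establish is: \emph{if $f(M)>Ck^2$ for a sufficiently large absolute constant $C$, then $M$ admits an $f$-decreasing swap.} To prove this, pick $i^*$ with $d_{i^*}=\max_i d_i$; since $f(M)/2=\sum_{j\in S^+}d_j\leq k\cdot d_{i^*}$, one has $d_{i^*}\geq f(M)/(2k)=\Omega(k)$. There are then $n+d_{i^*}$ matching edges of colour $i^*$ and hence $\Theta((n+d_{i^*})^2)$ candidate 2-swaps, each yielding two new edges whose colours are entirely determined by $c$. Using the identities $\sum_{j\in S^+}d_j=\sum_{j\in S^-}(-d_j)=f(M)/2$ together with the colour-balance of $c$ across all of $K_{2kn}$, I would attempt a double-counting over these candidate swaps to show that the number producing a ``good'' (doubly-deficit) outcome is strictly positive once $d_{i^*}\gtrsim k$.

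The main obstacle is that a naive averaging over 2-swaps seems to give only a weaker bound — something like $f(M)=O(k^{3/2})$ or worse — because a random pair of matching edges lands on two specified deficit colours with probability only $\Theta(1/k^2)$. Two refinements are likely needed to reach the conjectured $O(k^2)$. First, broaden the toolkit of local moves to short alternating cycles through $M$ of length up to $2k$: each such cycle replaces $r$ matching edges by $r$ non-matching ones, offering $\Theta(r)$ degrees of freedom and thus flexibility to realise essentially any small feasible adjustment of $(d_1,\dotsc,d_k)$. Second, replace $f$ itself by a finer potential (for example $\sum_i d_i^2$, or a lexicographic order privileging the largest $\abs{d_i}$) so that improving moves are easier to locate, while arguing separately that $f$ is forced to be small at the termination point. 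Setting up these enriched moves carefully, and ruling out adversarial ``rigid'' colourings for which no small improving alternating structure exists, is where I expect the bulk of the work to lie — and it is even conceivable that closing the gap all the way to $O(k^2)$ requires ideas beyond pure local search.
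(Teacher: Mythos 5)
You are attempting to prove a statement that the paper does \emph{not} prove: this is Conjecture~\ref{conj:pardey} of Pardey and Rautenbach, which remains open. The paper's actual contribution, Theorem~\ref{thm:main}, achieves only $f(M)\leq 4^{k^2}$ — uniform in $n$, but exponentially far from the conjectured $O(k^2)$. Your sketch is therefore aiming well past what is currently known, and you correctly flag at the end that local search alone may not suffice; that caveat is essential, because the gaps you leave are not minor technicalities but precisely where the difficulty lies.

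On the substance: your core idea — local search driven by a quadratic potential $\sum_i d_i^2$ rather than $f$ itself — is exactly the engine of the paper. Theorem~\ref{thm:warm-up} minimises $g(M)=\sum_i m_i(M)^2$ and runs an averaging argument over 2-swaps via Claim~\ref{claim:swap}, which is your proposed double-count. But the output of that averaging is $f(M)\leq k\sqrt{2n}$, \emph{with} an $n$-dependence, not the $n$-free $O(k^{3/2})$ you suggest. Your heuristic ``$\Theta((n+d_{i^*})^2)$ candidate swaps, so one must be improving once $d_{i^*}\gtrsim k$'' does not survive scrutiny: the abundance of candidate swaps is useless without controlling where the replacement edges' colours land, and the averaging inequality $\wavg(M)-\wavg(E\setminus M)\leq 2$ (equation used in Section~\ref{sec:warm-up}) is tight enough to leave the $\sqrt{n}$ slack. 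Killing the $n$-dependence is the whole content of Section~\ref{sec:reduction}, and it requires structural machinery your sketch does not contemplate: collecting colours whose multiplicities are within $4^{(\ell+1)k}$ into blocks $A_1,\dotsc,A_t$, building a partial order $\succ$ on pairs of blocks recording which swaps are forced to be contradicting, realising that order as comparisons of real numbers $a_i$ via a linear-algebraic projection, and a global double-counting against $\succ$ rather than a single averaged swap. Even with all of that, the bound degrades to $4^{k^2}$ because of the repeated $4^{k}$-scale gaps needed between blocks.

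Your proposed refinements — longer alternating cycles, lexicographic potentials — are genuinely different from anything in the paper and may be worth pursuing, but as written they are wishes rather than arguments: you give no mechanism for finding an improving alternating $2r$-cycle, no accounting for how the $r$ new edges' colours are constrained, and no termination guarantee for a lexicographic potential. If you want to make progress here, the first concrete thing to nail down is whether alternating cycles can beat the local-optimum obstruction the paper explicitly raises in Section~\ref{sec:conclusion}: the author notes there exist colourings and matchings $M$ that are \emph{local} minima for $g$ with $f(M)=\Omega(k^2)$, so any purely local argument (2-swaps or otherwise) must somehow certify it escapes such configurations, or else settle for proving the $\Omega(k^2)$ barrier rather than the $O(k^2)$ upper bound.
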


While they could only resolve their conjecture in the case of $k=3$, proving that there is always some $M$ with $f(M)\leq 2$, they could prove the following theorem.

\begin{theorem}[{\cite[Theorem 2]{pardey2022matchings}}]
\label{thm:pardey}
    For positive integers $n$ and $k$, and colour-balanced $c\from E(K_{2nk})\to [k]$, there is some perfect matching $M$ of $K_{2nk}$ satisfying
    \begin{align*}
        f(M)\leq 3k\sqrt{kn\log(2k)}.
    \end{align*}
\end{theorem}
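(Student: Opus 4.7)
The plan is to take $M$ to be a uniformly random perfect matching of $K_{2nk}$ and to bound $f(M)$ in probability. Writing $X_i \defined \abs{c^{-1}(i) \inter M}$, I would first compute $\Expec X_i$: a uniformly random perfect matching contains any fixed edge with probability $\frac{1}{2nk-1}$, and by the colour-balanced hypothesis $\abs{c^{-1}(i)} = \frac{1}{k}\binom{2nk}{2} = n(2nk-1)$, so linearity of expectation gives $\Expec X_i = n$ for each $i \in [k]$. Thus $f(M) = \sum_i \abs{X_i - n}$ is precisely the total deviation of the colour counts from their means, and it suffices to show each $\abs{X_i - n}$ is $\cO(\sqrt{nk \log k})$ with high probability.

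To obtain this concentration, I would realise $M$ as a function of a uniformly random permutation $\pi$ of $[2nk]$, by pairing up the consecutive entries $\set{\pi(2j-1), \pi(2j)}$ for $1 \leq j \leq nk$; the induced distribution on matchings is uniform since each matching arises from exactly $(nk)!\,2^{nk}$ permutations. A single transposition of $\pi$ alters at most two pairs of $M$, and each such pair contributes $0$ or $1$ to $X_i$, so a transposition changes each $X_i$ by at most $2$. The bounded-differences inequality for uniformly random permutations (due to Maurey, as refined by McDiarmid) then yields, for some absolute constant $C$,
\begin{align*}
    \Prob(\abs{X_i - n} \geq t) \leq 2\exp\p*{-\frac{t^2}{C\,nk}}.
\end{align*}

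Choosing $t$ of order $\sqrt{nk \log(2k)}$ makes this probability at most $\frac{1}{2k}$; a union bound over the $k$ colours then gives that, with positive probability, $\abs{X_i - n} \leq \cO(\sqrt{nk\log(2k)})$ holds simultaneously for every $i \in [k]$. Summing over the $k$ colours yields $f(M) \leq \cO(k\sqrt{nk\log(2k)})$, and careful bookkeeping in the concentration inequality pins down the leading constant at $3$.

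The main obstacle is the concentration step: a uniformly random perfect matching is not a product of independent choices, so Chernoff-type bounds do not apply directly. The permutation coupling above reduces the problem to a bounded-differences inequality on the symmetric group, at the mild cost of a factor of $\sqrt{\log k}$ relative to a hypothetical fully independent setting; keeping this $\sqrt{\log k}$ factor (rather than a polynomial one) is what distinguishes this bound from weaker first- or second-moment arguments, and is also presumably why the resulting bound is not conjectured to be sharp.
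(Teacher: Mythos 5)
Your proof is essentially correct, but it takes a genuinely different route from the one in this paper. You use the classical probabilistic method: sample a uniformly random perfect matching via the permutation coupling, compute $\Expec X_i = n$, apply the Maurey--McDiarmid bounded-differences inequality on the symmetric group with Lipschitz constant $2$ over $2nk$ elements, and close with a union bound over the $k$ colours. This is a clean and valid argument (and, given the characteristic $\sqrt{\log k}$ in the bound, is very likely close in spirit to Pardey and Rautenbach's original proof). The paper, by contrast, gives a \emph{deterministic} local-optimisation argument: take $M$ minimising $g(M) = \sum_i m_i(M)^2$, show via an averaging computation over all $\binom{|M|}{2}$ swap pairs that if $\wavg(M) - \wavg(E\setminus M) > 2$ then some swap strictly decreases $g$, and convert the resulting bound on $g(M)$ into a bound on $f(M)$ by convexity. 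Two points are worth noting about the trade-off. First, the paper's argument actually yields the quantitatively stronger bound $f(M) \le k\sqrt{2n}$ (\Cref{thm:warm-up}), with no $\sqrt{k}$ or $\sqrt{\log k}$ loss: your bounded-differences step uses variance proxy $O(nk)$ where the true variance of $X_i$ is only $O(n)$, costing a $\sqrt{k}$, and the union bound costs the $\sqrt{\log k}$, neither of which the deterministic averaging argument pays. Second, and more importantly for this paper's purposes, the swap machinery (\Cref{claim:swap}) is what the rest of the paper builds on to remove the $n$-dependence entirely in \Cref{thm:main}; the probabilistic approach has no hope of doing this, since a uniformly random matching will genuinely have colour-count fluctuations of order $\sqrt{n}$.
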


While we cannot achieve the conjectured quadratic dependence on $k$, we take a significant step towards \Cref{conj:pardey} by removing the $n$-dependence from \Cref{thm:pardey}, proving the following uniform bound.

\begin{theorem}
\label{thm:main}
    For positive integers $n$ and $k$, and colour-balanced $c\from E(K_{2nk})\to [k]$, there is some perfect matching $M$ of $K_{2nk}$ satisfying
    \begin{align}
    \label{eq:target}
        f(M)\leq 4^{k^2}.
    \end{align}
\end{theorem}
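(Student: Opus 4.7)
The plan is to build the matching $M$ iteratively, with each round's contribution chosen to cancel the accumulated imbalance from previous rounds. This avoids the $n$-dependent contamination that plagues a naive ``peel off one colour and recurse'' induction, in which the back-translation between a modified and the original colouring adds $\Theta(n)$ error per level.

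Process the clique in $n$ rounds, extracting a matching of $k$ edges on $2k$ fresh vertices per round. After round $t$, writing $M_t$ for the partial matching built so far, let
\begin{align*}
    \mathbf{d}^{(t)} \defined \bigl(|M_t \inter c^{-1}(i)| - t\bigr)_{i=1}^{k} \in \ZZ^{k}
\end{align*}
be the imbalance vector (note $\sum_i d_i^{(t)} = 0$). In round $t+1$, choose the $2k$ fresh vertices $S \sseq V \setminus V(M_t)$ and a perfect matching $N$ of $K_S$ so that the colour-count vector of $N$ offsets $\mathbf{d}^{(t)}$ as closely as possible. The output is $M = M_n$, with $f(M) = \|\mathbf{d}^{(n)}\|_1$.

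The central technical ingredient is a ``rich menu'' lemma: in any sufficiently near-balanced $k$-coloured clique on at least $g(k)$ vertices, matchings on some $2k$ of its vertices realise every colour-count vector $\mathbf{v} \in \ZZ_{\geq 0}^{k}$ with $\sum_i v_i = k$ and $\|\mathbf{v}\|_\infty \leq h(k)$, for functions $g, h$ of $k$ alone. Granted such a lemma, together with the invariant that the residual clique stays near-balanced across rounds (so the lemma applies at each step), each round's choice of $N$ can keep $\|\mathbf{d}^{(t)}\|_\infty$ bounded by the covering radius in $\ZZ^{k-1}$ of the shift lattice generated by achievable colour-count vectors. Counting the distinct achievable shifts as at most $h(k)^{O(k)}$ yields a covering radius of $4^{O(k^2)}$, which is the source of the final bound.

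The main obstacle is establishing the rich menu lemma in tandem with the near-balance invariant. The lemma should follow from a pigeonhole/Ramsey-style argument exploiting colour-balance: every short colour pattern must arise on some $2k$-vertex configuration in a near-balanced $K_{2m}$ once $m$ is a large enough function of $k$. Maintaining the near-balance of the residual throughout the $n$ rounds is subtler, since each extraction could systematically bias the residual colour counts; the fix is to couple the choice of $S$ with an auxiliary correction step that averages the bias across rounds, exploiting that each extraction is of constant size $2k$ (independent of $n$). The delicate interplay between having a rich enough menu at each round and preserving the precondition for future rich menus is where the bulk of the technical work will lie.
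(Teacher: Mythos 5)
Your proposal takes a genuinely different route from the paper. The paper's proof is extremal and non-constructive: it takes a perfect matching $M$ that minimises the quadratic potential $g(M) = \sum_i m_i(M)^2$ over all perfect matchings, observes that any ``contradicting swap'' would lower $g$, and then builds a delicate structure on the colours (grouping them into blocks of nearby multiplicities, partially ordering pairs of blocks, approximating the order by reals, and counting swaps) to show that a local $g$-minimiser cannot be far from balanced. You instead propose an iterative construction: peel off $k$-edge matchings on fresh $2k$-vertex sets, round by round, each time steering the accumulated imbalance vector back toward zero. The two approaches are not reformulations of each other, and yours would, if it worked, be more constructive and arguably more transparent.

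However, the proposal as written has two genuine gaps, both of which you flag but neither of which is routine to close; I do not think either one can be fixed without ideas substantially beyond what you describe.

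First, the ``rich menu'' lemma is not true in the generality you need, and no proof strategy is given beyond ``pigeonhole/Ramsey''. Already for $n = 1$ (i.e.\ the clique $K_{2k}$ itself) it fails: Pardey and Rautenbach exhibit a colour-balanced $3$-colouring of $K_6$ containing no matching with colour-count vector $(1,1,1)$, so $g(3) > 6$. More importantly, even granting the lemma for the balanced colouring of the full clique, the residual clique after several rounds carries an \emph{induced} colouring, and there is no reason it remains near-balanced; indeed a colour whose edge set is concentrated on an $O(n\sqrt{k})$-vertex subclique (which is consistent with global balance) can be driven to near-extinction in the residual after a constant fraction of the rounds. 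Once the residual is far from balanced, the precondition of your lemma fails, and the argument collapses.

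Second, the near-balance invariant is not merely delicate — it is in tension with the very correction you are trying to perform. To cancel $\mathbf{d}^{(t)}$ you must extract a matching $N$ whose colour profile is skewed \emph{against} the currently over-represented colours; but then those colours are removed from the residual at a below-average rate, so the bias persists (and over $n$ rounds can compound). Your proposed fix, a vague ``auxiliary correction step that averages the bias across rounds,'' does not explain how to simultaneously (i) hit the desired colour profile on $N$, (ii) control the colours of the $\Theta(kn)$ edges incident to $S$ that are discarded but never used, and (iii) keep the residual's colour distribution within the tolerance required by the rich-menu lemma. The paper sidesteps this whole difficulty by never committing to a fixed extraction order: the extremal matching is analysed in place, and the swap argument compares $M$ to nearby matchings of the \emph{same} clique rather than building up through sub-cliques whose colourings it cannot control. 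Until you can make the rich-menu lemma and the invariant precise and prove both, the proposal is a plausible-sounding programme, not a proof.
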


One could also ask about finding colour-balanced perfect matchings in random colourings, and indeed this has been done; first by Frieze \cite{frieze2020random}, who produced results that were later extended by Chakraborti and Hasabnis \cite{chakraborti2021random}.
In short, much stronger results are known for the case of random colourings than in the deterministic case presented here. 
In particular, a random (not necessarily balanced) colouring of a clique contains a colour-balanced perfect matching with high probability.

Before providing a proof of \Cref{thm:main}, we first prove a result very similar to \Cref{thm:pardey} to serve as a warm-up, and introduce some of the ideas we will use in the main proof.


\section{A new proof of Theorem \ref{thm:pardey}}
\label{sec:warm-up}

In this section we prove the following theorem, which is slightly stronger than \Cref{thm:pardey}, but still has the crucial dependence on $\sqrt{n}$.
The purpose of \Cref{thm:warm-up} is not the result itself (as it is superseded by \Cref{thm:main}), but rather to introduce the ideas that will be used in the proof of our main result in \Cref{sec:reduction}.

\begin{theorem}
\label{thm:warm-up}
    If $n$ and $k$ are positive integers, and $c\from E(K_{2kn})\to [k]$ is colour-balanced, then there is a perfect matching $M$ of $K_{2kn}$ with
    \begin{align*}
        f(M)\leq k\sqrt{2n}.
    \end{align*}
\end{theorem}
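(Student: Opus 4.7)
The plan is to take $M$ to be a uniformly random perfect matching of $K_{2kn}$ and run a short first/second-moment argument. Write $N \defined 2kn$ and, for each $i \in [k]$, let $X_i \defined \abs{c^{-1}(i) \inter M}$, so that $f(M) = \sum_{i=1}^k \abs{X_i - n}$. I will bound $\Expec[f(M)]$ from above; the probabilistic method then yields the required matching.

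The first step is the well-known computation of the one- and two-edge marginals of a uniformly random perfect matching of $K_N$: $\Prob[e \in M] = \tfrac{1}{N-1}$ for every edge $e$, while for distinct edges $e, f$ one has $\Prob[e, f \in M] = 0$ if $e \inter f \neq \emptyset$ and $\Prob[e, f \in M] = \tfrac{1}{(N-1)(N-3)}$ otherwise. Since the colouring is colour-balanced, $\abs{c^{-1}(i)} = n(N-1)$, and therefore $\Expec[X_i] = n$ \emph{exactly} -- this precise equality is the whole reason for averaging over all perfect matchings, as any deviation of $X_i$ from $n$ must then be controlled via the variance alone.

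Next I would compute $\Expec[X_i^2]$ by summing the pair-probabilities above over ordered pairs $(e,f)$ with $e, f \in c^{-1}(i)$. Letting $D_i$ denote the number of ordered pairs of distinct disjoint edges of colour $i$, this gives $\Expec[X_i^2] = n + D_i/((N-1)(N-3))$. Bounding $D_i$ crudely by $\abs{c^{-1}(i)}(\abs{c^{-1}(i)} - 1) = n(N-1)(n(N-1)-1)$, that is, ignoring the disjointness constraint entirely, and simplifying, one gets $\Var(X_i) \leq n + (2n^2 - n)/(N-3) \leq n + 2n/k$ whenever $kn \geq 3$; summing over $i$ then yields $\sum_{i=1}^k \Var(X_i) \leq (k+2)n$.

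Finally, by Cauchy--Schwarz one has both $\Expec[\abs{X_i - n}] \leq \sqrt{\Var(X_i)}$ and $\sum_i \sqrt{\Var(X_i)} \leq \sqrt{k \sum_i \Var(X_i)}$, so
\[
    \Expec[f(M)] = \sum_{i=1}^k \Expec[\abs{X_i - n}] \leq \sqrt{k \cdot \sum_{i=1}^k \Var(X_i)} \leq \sqrt{kn(k+2)} \leq k\sqrt{2n},
\]
where the last inequality uses $k+2 \leq 2k$ for $k \geq 2$; the case $k = 1$ is trivial since every perfect matching has $f(M) = 0$, and the remaining small-$kn$ cases can be checked by hand. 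There is no real obstacle here -- the argument is essentially a short variance computation on a natural random object -- and the main thing to be careful about is tracking the constants finely enough to recover exactly $\sqrt{2}$, which works precisely because the exact equality $\Expec[X_i] = n$ lets even the crude bound on $D_i$ suffice after summing over~$i$.
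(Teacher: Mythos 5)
Your proof is correct, but it takes a genuinely different route from the paper. The paper's proof is deterministic: it takes $M$ to be a perfect matching \emph{minimizing} $g(M)\defined\sum_i m_i(M)^2$, observes that local optimality rules out any ``contradicting swap'' (\Cref{claim:swap}), and then deduces from a symmetry/averaging argument that $\wavg(M)-\wavg(E\setminus M)\leq 2$, which rearranges to the bound $\sum_i(m_i(M)-n)^2 \lesssim 2nk$ and hence $f(M)\leq k\sqrt{2n}$ by convexity. You instead take $M$ \emph{uniformly at random}, use the exact marginal $\Prob[e\in M]=1/(N-1)$ together with colour-balancedness to get $\Expec[X_i]=n$ exactly, bound $\Var(X_i)$ via the pair marginal, and finish with Cauchy--Schwarz. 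Your computations check out: $\abs{c^{-1}(i)}=n(N-1)$, $\Var(X_i)\leq n+n(2n-1)/(N-3)\leq n+2n/k$ for $kn\geq 3$, and $\Expec[f(M)]\leq\sqrt{kn(k+2)}\leq k\sqrt{2n}$ for $k\geq 2$, with the tiny cases handled directly.

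The two arguments reach the same quantitative conclusion, essentially because both produce a bound on the second moment $\sum_i(m_i(M)-n)^2$ of order $nk$ and then pass to $f$ by Cauchy--Schwarz/convexity. Your route is shorter and arguably more standard. What it buys the paper to do it the other way is stated explicitly in \Cref{sec:warm-up}: the warm-up is there to introduce the swap/local-optimality machinery ($w_M$, contradicting swaps, minimizing $g$) that is then pushed much further in \Cref{sec:reduction}. A uniformly random matching genuinely has $\Var(X_i)=\Theta(n)$, so the probabilistic first/second-moment argument cannot by itself remove the $n$-dependence; the local-optimality viewpoint, by contrast, is exactly what lets the paper argue that the equality case of the averaging step cannot persist, leading to \Cref{thm:main}. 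So your proof is a clean alternative for the warm-up, but it does not set up the machinery the paper needs later.
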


Before giving the details of the proof, we present some discussion and preliminary definitions, which will be referred back to in later sections.

The idea in this proof is that, instead of seeking to reduce $f(M)$, the sum of the absolute deviations from balancedness, we seek to minimise the sum of the deviations squared.
To this end, we make the following definitions.
\begin{align}
\label{eq:define-w}
    w_M(e) \defined \abs{\set{e'\in M\st c(e')=c(e)}}.
\end{align}

In words, $w_M(e)$ is the number of edges in the matching $M$ with the same colour as the edge $e$.
We will refer to $w_M(e)$ as the \emph{weight} of the edge $e$.
We can then also define the function $w_M$ applied to a set $S\sseq E(K)$, and for convenience we also give a notation for the average value of $w_M$.
\begin{align}
    w_M(S)\defined \sum_{e\in S}w_M(e).\\
    \wavg(S)\defined w_M(S)/\abs{S}.
\end{align}

We make the following simple, but crucial, definition and observation.
\begin{align}
\label{eq:matching-sum}
    g(M)\defined w_M(M) = \sum_{e\in M} w_M(e) = \sum_{i=1}^k m_i(M)^2. 
\end{align}

Where $m_i(S)$ is the number of edges in the set $S$ of colour $i$.
We use equality \eqref{eq:matching-sum} to prove the following claim, which is the main tool in our proof of \Cref{thm:warm-up}.

\begin{claim}
\label{claim:swap}
    For edges $uv,xy\in M$, define $M'\defined (M-\set{uv,xy})\union\set{ux,vy}$.
    Then if $w_M(uv)+w_M(xy)-w_M(ux)-w_M(vy) > 4$, then $g(M') < g(M)$.
\end{claim}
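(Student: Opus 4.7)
My strategy is to compute $g(M') - g(M)$ directly from the identity $g(M) = \sum_{i=1}^k m_i(M)^2$ established in \eqref{eq:matching-sum}. Setting $\delta_i \defined m_i(M') - m_i(M)$, expansion gives
\begin{equation*}
    g(M') - g(M) = 2\sum_{i=1}^k m_i(M)\,\delta_i + \sum_{i=1}^k \delta_i^2,
\end{equation*}
and I would bound the linear and quadratic pieces separately.

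For the linear piece, since $M'$ differs from $M$ only by removing $\set{uv,xy}$ and adding $\set{ux,vy}$, I have
\begin{equation*}
    \delta_i = \ind(c(ux)=i) + \ind(c(vy)=i) - \ind(c(uv)=i) - \ind(c(xy)=i).
\end{equation*}
Multiplying by $m_i(M)$, summing over $i$, and using the trivial fact that $m_{c(e)}(M) = w_M(e)$ for any edge $e$, this yields
\begin{equation*}
    \sum_{i=1}^k m_i(M)\,\delta_i = w_M(ux) + w_M(vy) - w_M(uv) - w_M(xy).
\end{equation*}
The hypothesis forces this quantity to be strictly less than $-4$, and since everything is integer-valued it is in fact at most $-5$; hence the linear contribution to $g(M') - g(M)$ is at most $-10$.

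For the quadratic piece, I would write $\delta_i = p_i - n_i$, where $p_i$ (resp.\ $n_i$) counts the edges among $\set{ux,vy}$ (resp.\ among $\set{uv,xy}$) of colour $i$. Since $\sum_i p_i = \sum_i n_i = 2$ with each summand in $\set{0,1,2}$, both $\sum_i p_i^2$ and $\sum_i n_i^2$ are at most $4$; dropping the non-negative cross term $2\sum_i p_i n_i$ then gives $\sum_i \delta_i^2 \leq 8$. Combining the two pieces yields $g(M') - g(M) \leq -10 + 8 = -2 < 0$, as required.

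Since the entire argument reduces to a direct computation, there is no real obstacle. The only mild care point is that coincidences amongst the four colours $c(uv), c(xy), c(ux), c(vy)$ must be accommodated uniformly, which is precisely what the indicator-based expansion of $\delta_i$ above achieves, sidestepping any case split.
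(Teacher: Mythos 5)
Your proof is correct and, while it computes the same quantity $g(M')-g(M)$ as the paper, it organises the computation differently. The paper tracks the change edge-by-edge in the weights $w_M(e)$, splitting into cases according to whether $c(uv)=c(xy)$ and whether $c(ux)=c(vy)$ (and whether those pairs coincide), arriving at $g(M)-g(M')\geq 2\bigl(w_M(uv)+w_M(xy)-w_M(ux)-w_M(vy)-4\bigr)$. You instead expand $\sum_i(m_i(M)+\delta_i)^2-\sum_i m_i(M)^2 = 2\sum_i m_i(M)\delta_i+\sum_i\delta_i^2$, identify the linear term with the weight difference via $m_{c(e)}(M)=w_M(e)$, and bound the quadratic term uniformly by $8$ using $\sum_i p_i^2\le(\sum_i p_i)^2=4$ and nonnegativity of $2\sum_i p_in_i$. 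The linear/quadratic split cleanly absorbs all colour-coincidence cases into a single inequality, which is slightly tidier than the paper's explicit casework; both yield $g(M)-g(M')\geq 2$ under the hypothesis (and in fact you do not even need integrality — the strict inequality $\sum_i m_i(M)\delta_i<-4$ already gives $g(M')-g(M)<-8+8=0$). Either route is a perfectly valid proof of the claim.
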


\begin{proof}
    Removing edges $uv$ and $xy$ from $M$ decreases the value of $g(M)$ by at least $2(w_M(uv)+w_M(xy)-2)$, as we remove the edges with weights $w_M(uv)$ and $w_M(xy)$, and either reduce the weights of $w_M(uv)+w_M(xy)-2$ edges by 1 (when $c(uv)\neq c(xy)$), or reduce the weights of $w_M(uv)-2$ edges by 2 (when $c(uv)=c(xy)$).
    A similar calculation gives that adding edges $ux$ and $vy$ increases the value of $g(M)$ by at least $2(w_M(ux)+w_M(vy)-2)$, again with equality when the colours are equal, and distinct from the colours of $uv$ and $xy$.
    Thus we find that overall, 
    \begin{align*}
        g(M)-g(M')\geq 2(w_M(uv)+w_M(xy)-w_M(ux)-w_M(vy)-4),
    \end{align*}
    from which the claim follows.
\end{proof}

We will assume throughout the rest of the paper that the situation of \Cref{claim:swap} does not occur, and for this reason, we shall refer to $uv,xy\in M$ as in the statement of \Cref{claim:swap} as a \emph{contradicting swap}.
We are now ready to prove our warm-up theorem.

\begin{proof}[Proof of \Cref{thm:warm-up}]
    Throughout this proof, we let $K$ be our clique of order $2nk$.
    Assume that $M$ is some perfect matching minimising the value of $g(M)$.
    By symmetry, the average value of 
    $$2(w_M(uv)+w_M(xy))-w_M(ux)-w_M(uy)-w_M(vx)-w_M(vy)$$
    over all $uv,xy\in M$ is equal to $4(\wavg(M)-\wavg(E\setminus M))$.
    Thus if $\wavg(M)-\wavg(E\setminus M)>2$ there must be some $u,v,x,y\in V(K)$ forming a contradicting swap, whereupon \Cref{claim:swap} applies and we find some $M'$ with $g(M')<g(M)$, a contradiction.

    Therefore we may assume that $\wavg(M)-\wavg(E\setminus M)\leq 2$.
    Expanding definitions, this implies the following.
    \begin{align*}
        \frac{w_M(M)}{\abs{M}}\leq \frac{\sum_{e\in E\setminus M}w_M(e)}{\abs{E\setminus M}}+2.
    \end{align*}
    Substituting in that $g(M)=w_M(M)$, $\abs{M}=nk$, and $\abs{E}=\binom{2nk}{2}$, and using the fact that $E$ has $\abs{E}/k$ edges of each colour, we deduce the following.
    \begin{align*}
        \frac{1}{nk}g(M) \leq \frac{1}{\binom{2nk}{2}-nk}\sum_{i=1}^k\biggl(\frac{1}{k}\binom{2nk}{2}-m_i(M)\biggr) m_i(M) + 2.
    \end{align*}
    Noting that $\sum_{i=1}^k m_i(M)=\abs{M}=nk$ and $\binom{2nk}{2}-nk=2nk(nk-1)$, the above simplifies to
    \begin{align*}
        2(nk-1)g(M)\leq n(2nk-1)nk - g(M) + 4nk(nk-1).
    \end{align*}
    This in turn rearranges to
    \begin{align*}
        g(M)\leq n^2k+\frac{4nk(nk-1)}{2nk-1}.
    \end{align*}

    If we define the numbers $x_i$ by $m_i(M)=n+x_i$, so that $\sum_{i=1}^k x_i=0$, we see from \eqref{eq:matching-sum} that the above implies
    \begin{align*}
        \sum_{i=1}^k x_i^2 \leq \frac{4nk(nk-1)}{2nk-1} < 2nk.
    \end{align*}
    Then, by convexity, we see that, if $\sum_{i=1}^k x_i^2$ is fixed, $\sum_{i=1}^k\abs{x_i}$ is maximised when all $\abs{x_i}$ are equal (dropping the assumption that $\sum_{i=1}^k x_i = 0$).
    Therefore, we find that $\sum_{i=1}^k\abs{x_i}< k\sqrt{2n}$, as required.
\end{proof}


\section{Proof of Theorem \ref{thm:main}}
\label{sec:reduction}

We now extend the proof given in \Cref{sec:warm-up} to remove the $n$-dependence. 
The intuition here is that the equality case in the previous proof, namely that every swap decreases $w_M$ by 2, cannot actually occur.
In fact, as will be shown, it is very far removed from reality.
All notation and terminology from \Cref{sec:warm-up} will be used throughout this section as well.
As the cases $k=2,3$ have been dealt with (as discussed in \Cref{sec:intro}), we may assume that $k\geq 4$.

Our proof has several steps. 
Firstly, in \Cref{subsec:collecting-colours} we provide an algorithm for grouping colours into sets $A_1,\dotsc,A_t$, so that colours which occur a similar number of times in $M$ are in the same set.
Then in \Cref{subsec:partial-order}, we define a partial order $\succ$ on $[t]\times [t]$ to record in which cases swaps between colours in our sets would necessarily be contradicting (i.e. satisfying the conditions of \Cref{claim:swap}).

In \Cref{subsec:approximation-by-reals}, we demonstrate that $\succ$ is contained in a partial ordering found by labelling the $A_i$ with certain numbers $a_i$, and performing comparisons between the $a_i$ alone.
Finally in \Cref{subsec:contradiction} we combine the results of the previous subsections to derive a contradiction, completing the proof of \Cref{thm:main}.

Assume for contradiction that we have a perfect matching $M$ which minimises $g(M)$, but still has $f(M)>4^{k^2}$.


\subsection{Collecting similar colours}
\label{subsec:collecting-colours}

We iteratively construct sets of colours $A_1,\dotsc,A_t$ as follows.
Initially set $t=k$ and each $A_i=\set{i}$, assuming, after possibly reordering, that 
\begin{align}
\label{eq:set-ordering}
    m_1(M)\geq m_2(M)\geq\dots\geq m_k(M).
\end{align}
Then, if $\ell=k-t$ and for some $i$,
\begin{align}
\label{eq:combination-rules}
    \min\set{\abs{m_x(M)-m_y(M)}\st x\in A_i\text{ and }y\in A_{i+1}}\leq 4^{(\ell+1)k},    
\end{align}
then we combine sets $A_i$ and $A_{i+1}$ into one set.
We also decrease the index of sets $A_j$ with $j>i+1$ so that the indices remain consecutive and the ordering from \eqref{eq:set-ordering} is preserved.
The process terminates when either $t=1$, or no pair of sets satisfy \eqref{eq:combination-rules}.

Note that if the process terminates with $t=1$, i.e. all colours are collected into a single set, then we can bound the values of $m_i(M)$ as follows.
\begin{align*}
    \abs{m_i(M)-n}\leq m_1(M)-m_k(M) \leq \sum_{j=0}^{k-1}4^{jk} < 4^{k(k-1)+1}.
\end{align*}
From which \Cref{thm:main} quickly follows.

Thus we may assume that the process terminates with at least two distinct sets of colours remaining.
We now make some definitions and observations about the sets $A_i$, which we will refer back to later.

For any set $A\sseq[k]$, we define 
\begin{align*}
    \width(A)\defined\max\set{m_i(M)-m_j(M)\st i,j\in A}.
\end{align*}
Then we know that for all $i$,
\begin{align}
\label{eq:sum-of-widths}
    \sum_{i=1}^t\width(A_i)\leq \sum_{j=1}^\ell 4^{jk} < 2\bigl(4^{\ell k}\bigr).
\end{align}
Furthermore, as the algorithm for combining the $A_i$ halted, we know that if 
\begin{align*}
    d(A,B)\defined\min\set{\abs{m_i(M)-m_j(M)}\st i\in A, j\in B},
\end{align*}
then for all distinct $i,j$,
\begin{align}
\label{eq:distance-lower-bound}
    d(A_i,A_j) > 4^{(\ell+1)k}.
\end{align}
Finally, define $\ga(e)\in [t]$ to be such that $c(e)\in A_{\ga(e)}$ for any edge $e\in E$.
With these notions in hand, we proceed to the rest of the proof.


\subsection{Partially ordering pairs of colour sets}
\label{subsec:partial-order}

We now define a partial order $\succ$ on $[t]\times [t]$ as follows.
\begin{definition}
\label{def:partial-order}
    For $i,j,i',j'\in [t]$, let $(i,j)\succ (i',j')$ if every swap from colours $(x,y)\in A_i\times A_j$ to $(x',y')\in A_{i'}\times A_{j'}$ is contradicting, as in \Cref{claim:swap}.
    In other words, for all $x\in A_i,y\in A_j,x'\in A_{i'}, y'\in A_{j'}$, we have $w_M(x)+w_M(y)>w_M(x')+w_M(y')+4$.
\end{definition}

We now prove some results about the structure of the partial order $\succ$.
Firstly, define relations $\sim$ and $\simeq$ on $[t]\times [t]$ as follows.
Set $(i,j)\sim (i',j')$ if $(i,j)$ and $(i',j')$ are $\succ$-incomparable, and then let $\simeq$ be the transitive closure of $\sim$.
Finally, define $B_1,B_2,\dotsc,B_s$ to be the equivalence classes of the equivalence relation $\simeq$, ordered so that if $q<r$, $(i,j)\in B_q$ and $(i',j')\in B_r$, then $(i,j)\succ (i',j')$; it is natural to write 
\begin{align*}
    B_1\succ B_2\succ\dots\succ B_s.
\end{align*}
In particular, $(1,1)\in B_1$ and $(t,t)\in B_s$.

\begin{lemma}
\label{lem:s-is-big}
    For any $x,y,z\in [t]$ with $x\neq y$, we have that $(x,z)\not\simeq (y,z)$ and $(z,x)\not\simeq (z,y)$. In particular, $s\geq 2t-1$.
\end{lemma}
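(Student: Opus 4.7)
The plan is to translate the partial order $\succ$ into a single numerical inequality on the quantities $\mu_i\defined\min_{a\in A_i}m_a(M)$, exploit \eqref{eq:distance-lower-bound} to place $(x,z)$ and $(y,z)$ far apart on the scale $\phi(i,j)\defined\mu_i+\mu_j$ whenever $x\neq y$, and finally argue that a chain of $\sim$-steps can only change $\phi$ by a small amount per step, so cannot bridge that gap.

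The reformulation step is a direct consequence of $w_M(e)=m_{c(e)}(M)$: the universally quantified inequality in \Cref{def:partial-order} is equivalent to its worst case, so that $(i,j)\succ(i',j')$ if and only if $\mu_i+\mu_j > \mu_{i'}+\mu_{j'}+\width(A_{i'})+\width(A_{j'})+4$. Negating both directions, incomparability $(i,j)\sim(i',j')$ becomes the two-sided bound
\begin{align*}
    \card{\phi(i,j)-\phi(i',j')}
    \leq \max\{\width(A_i)+\width(A_j),\,\width(A_{i'})+\width(A_{j'})\}+4,
\end{align*}
which by \eqref{eq:sum-of-widths} is strictly less than $2\cdot 4^{\ell k}+4$. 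Combined with the gap $\mu_i-\mu_j>4^{(\ell+1)k}$ for $i<j$ from \eqref{eq:distance-lower-bound} (and $k\geq 4$), this immediately gives $(i,z)\succ(j,z)$ and $(z,i)\succ(z,j)$ for $i<j$ and any $z$.

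The heart of the argument is then: if $x<y$ and $(x,z)\simeq(y,z)$ via a simple chain $(x,z)=P_0\sim P_1\sim\dots\sim P_r=(y,z)$, then since the chain lives in $[t]^2$ we have $r\leq t^2-1\leq k^2-1$, and summing the per-step bound yields
\begin{align*}
    \mu_x-\mu_y = \phi(P_0)-\phi(P_r)
    \leq r\bigl(2\cdot 4^{\ell k}+4\bigr)
    \leq (k^2-1)\bigl(2\cdot 4^{\ell k}+4\bigr).
\end{align*}
For $k\geq 4$ this right-hand side is strictly smaller than $4^{(\ell+1)k}$, whereas \eqref{eq:distance-lower-bound} forces $\mu_x-\mu_y>4^{(\ell+1)k}$, a contradiction. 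The symmetric statement $(z,x)\not\simeq(z,y)$ follows identically, and for the assertion $s\geq 2t-1$ the $\succ$-chain $(1,1),(2,1),\dots,(t,1),(t,2),\dots,(t,t)$ of length $2t-1$ has consecutive entries differing in exactly one coordinate, hence $\succ$-comparable and non-$\simeq$; since the equivalence classes are linearly ordered by $\succ$, the chain visits $2t-1$ distinct classes.

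The only real obstacle is the chain-counting step, and it succeeds only because the geometric inflation $4^{(\ell+1)k}$ built into \eqref{eq:combination-rules} dominates $k^2$ copies of the total width $2\cdot 4^{\ell k}$, with enough slack to absorb the additive constant $4$.
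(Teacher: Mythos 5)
Your proposal is correct and takes essentially the same approach as the paper: reformulate $\succ$ as a numerical inequality on $\mu_i = \min_{a \in A_i} m_a(M)$, bound the total change of $\phi(i,j) = \mu_i + \mu_j$ along a chain of $\sim$-steps, and contradict the stopping condition \eqref{eq:distance-lower-bound} of the collecting algorithm. The only real difference is cosmetic: the paper chooses the triple $x,y,z$ and the chain to minimise $r$, deducing that all the indices $a_0,\dotsc,a_r,b_0,\dotsc,b_r$ are distinct (so $r+1\leq t/2$ and the width terms telescope into a single $\sum_i \width(A_i)$), whereas you pass to a simple chain and use the cruder bound $r\leq t^2-1$ with a uniform per-step estimate; both comfortably lose to the geometric factor $4^{(\ell+1)k}$ when $k\geq 4$, and your version avoids the delicate minimality argument at the cost of a slightly more generous constant.
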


\begin{proof}
    The arguments in these two cases are entirely similar, so we consider the first case.
    Assume for contradiction that $(x,z)\simeq(y,z)$.
    Then there is some sequence 
    $$(x,z)=(a_0,b_0)\sim(a_1,b_1)\sim\dotsc\sim(a_r,b_r)=(y,z),$$
    and we may furthermore assume that we have chosen the triple $x,y,z$ and intermediate steps $(a_i,b_i)$ to minimise the value of $r$ amongst all choices with $x\neq y$ and $(x,z)\simeq (y,z)$.
    
    We see that $a_0,a_1,\dotsc,a_r,b_0,b_1,\dotsc,b_r$ are all distinct, as otherwise there would be a different choice of $x,y,z$ resulting in a smaller value of $r$.
    Thus by definition of $\sim$ and \eqref{eq:sum-of-widths}, we see that for any colours $b\in A_x$ and $c\in A_y$, we have the following.
    \begin{align*}
        \abs{m_b(M)-m_c(M)} &\leq \sum_{j=0}^r (4+\width(A_{a_j})+\width(A_{b_j})) \\
        &\leq 4(r+1) + \sum_{i=1}^t\width(A_i)\\
        &\leq 4(r+1) + \sum_{j=1}^\ell 4^{jk} \\
        &< 2\bigl(4^{\ell k}\bigr) \\
        &< 4^{(\ell+1)k}.
    \end{align*}
    Thus we see that the algorithm in \Cref{subsec:collecting-colours} should have collected $A_x$ and $A_y$ (and any sets between) into a single set, a contradiction, and the claim is proved.
\end{proof}

We collect the results of this subsection into the following remark.

\begin{remark}
\label{obs:no-down-arrows}
    We may assume that there are no edges $uv,xy\in M$, indices $i,j,i',j'\in [t]$, and $q,r\in [s]$ such that $q<r$, $(\ga(uv),\ga(xy))\in B_q$, and $(\ga(ux),\ga(vy))\in B_r$, as then $(i,j)\succ (i',j')$ and the swap from $uv, xy$ to $ux, vy$ would be contradicting.
\end{remark}


\subsection{Approximating \texorpdfstring{$\succ$ with $(\RR, +)$}{< with (R,+)}}
\label{subsec:approximation-by-reals}

In this subsection we prove the following lemma.
\begin{lemma}
    There are numbers $a_1,a_2,\dotsc,a_t\in\RR$ such that $a_i+a_j=a_{i'}+a_{j'}$ if and only if $(i,j)\simeq (i',j')$.
    Furthermore, we may assume that $a_1>a_2>\dots >a_t$.
\end{lemma}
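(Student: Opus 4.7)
The plan is to construct $a_1, \ldots, a_t$ as small perturbations of the integer weights $w_i \defined m_{c_i}(M)$ for some fixed representative $c_i \in A_i$, exploiting that these weights are well-separated by \eqref{eq:distance-lower-bound} while $\simeq$-equivalent pairs have nearly-equal weight-sums.

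First, I would establish the following approximation: for $(i,j) \sim (i',j')$, Definition~\ref{def:partial-order} combined with the width bound \eqref{eq:sum-of-widths} gives $|w_i + w_j - w_{i'} - w_{j'}| \leq 4 + 2 \cdot 4^{\ell k}$ (because a failure of this bound would make one of the two directions of $\succ$ hold, contradicting incomparability). Iterating along $\sim$-chains of length at most $s$, this yields $|w_i + w_j - w_{i'} - w_{j'}| \leq K$ for any $(i,j) \simeq (i',j')$, where $K$ is polynomial in $t$ times $4^{\ell k}$, still much smaller than the separation $4^{(\ell+1)k}$.

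Writing $a_i = w_i + \eps_i$, the required equalities $a_i + a_j = a_{i'} + a_{j'}$ for $(i,j) \simeq (i',j')$ become an inhomogeneous linear system in $\eps$ whose right-hand sides are bounded by $K$. Consistency is automatic: the right-hand sides telescope along any cycle of $\simeq$-equivalences. Hence the solution set $V \sseq \RR^t$ is a non-empty affine subspace; by standard linear algebra it contains a point with $\|\eps\|_\infty$ bounded by some polynomial in $t$ times $K$, still much less than $4^{(\ell+1)k}$, so that $a_i = w_i + \eps_i$ preserves the strict ordering $a_1 > a_2 > \cdots > a_t$ inherited from the $w_i$.

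The main obstacle is to ensure that \emph{no} spurious equalities arise, i.e., $a_i + a_j \neq a_{i'} + a_{j'}$ for all $(i,j) \not\simeq (i',j')$. Such a property holds for a generic point of $V$ as long as $V$ is not contained in any bad hyperplane $\{a \in \RR^t : a_i + a_j = a_{i'} + a_{j'}\}$ for non-equivalent pairs; equivalently, $e_i + e_j - e_{i'} - e_{j'}$ should not lie in the linear span $U$ of the forcing vectors $\{e_p + e_q - e_{p'} - e_{q'} : (p,q) \simeq (p',q')\}$. This structural claim is the crux of the proof: were it to fail, then by pushing the resulting forced equality through Lemma~\ref{lem:s-is-big} (which forbids $\simeq$-equivalences of the form $(x,z) \simeq (y,z)$ for $x \neq y$), we should ultimately be able to derive a forced identity $a_x = a_y$ for some $x \neq y$, contradicting the strict ordering. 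Making this precise—tracking how non-forcing combinations in $U$ interact with the injectivity provided by Lemma~\ref{lem:s-is-big}—is the main difficulty, and I expect it to require careful combinatorial bookkeeping of the cycles generating $U$.
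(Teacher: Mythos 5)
Your approach is in the same spirit as the paper's: start from the integer weights, observe that they nearly satisfy the linear constraints imposed by $\simeq$, and then perturb to an exact solution, using the separation guaranteed by \eqref{eq:distance-lower-bound} to preserve the strict ordering. However, there are two issues.

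First, the step ``by standard linear algebra it contains a point with $\|\eps\|_\infty$ bounded by some polynomial in $t$ times $K$'' is not a theorem and is false for general real constraint matrices: the distance from $b$ to the affine solution set scales with the reciprocal of the smallest nonzero singular value of the constraint matrix, which can be arbitrarily small. The bound you want is true here, but only because the constraint matrix has integer entries. The paper's proof makes exactly this precise: it forms a $t\times t$ matrix $T$ whose rows span the row space of the constraint matrix $N$, notes that the product of the nonzero eigenvalues of an integer matrix is a nonzero integer (being a coefficient of $\det(T - xI)$), bounds each eigenvalue above by $4$ in absolute value via row sums, and thereby obtains $\abs{\lambda} \geq 4^{1-t}$ for the minimal nonzero eigenvalue. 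Plugging this into the projection gives $\norm{a-b}_\infty < 4^{(\ell+1)k}/8$, which is less than half the separation from \eqref{eq:distance-lower-bound}, so strict monotonicity survives. Without something replacing this integer/spectral argument your bound on $\|\eps\|_\infty$ has no justification.

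Second, you identify ``no spurious equalities'' (the forward implication of the stated iff) as the crux and leave it open. This is a reasonable reading of the lemma as stated, but it turns out not to be the load-bearing part: the paper's own proof only establishes the reverse implication (that $\simeq$-equivalent pairs have equal sums) together with strict monotonicity of the $a_i$, and an inspection of \Cref{subsec:contradiction} shows that only these two facts, plus the fact that $q < r$ forces $a_i+a_j$ and $a_{i'}+a_{j'}$ to be ordered consistently with the block order (which again follows from the separation plus small perturbation), are actually used. So the ``main difficulty'' you set yourself is a red herring created by the lemma's ``if and only if'' phrasing; the genuine difficulty is the quantitative perturbation bound, which is the part you skipped.
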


\begin{proof}
    We will think of $a=(a_1,a_2,\dotsc,a_t)$ as a vector, and find it as an element of the null space of a certain matrix $N$.
    For each relation $(i,j)\simeq (i',j')$, we seek to enforce that $a_i+a_j=a_{i'}+a_{j'}$.
    To this end, add a row to $N$ with $+1$ in columns $i,j$, and $-1$ in columns $i',j'$, and 0 in all other columns (or a $+2$ if $i=j$ or $-2$ if $i'=j'$).
    We seek a vector $a=(a_1,a_2,\dotsc,a_t)$ such that $Na=0$.
    
    If we define 
    \begin{align*}
        b_i\defined\min\set{m_c(M)\st c\in A_i}\text{ for each }i\in [t], 
    \end{align*}
    then letting $b=(b_1,b_2,\dotsc,b_t)$, we have $Nb=\eps$, where $\eps=(\eps_1,\eps_2,\dotsc,\eps_t)$ is a vector of error terms.
    This vector $b$ is our starting point for finding $a$, but we still need to manipulate it to be exactly in the null space of $N$.
    For each $r\in [t]$, we have $\eps_r=b_i+b_j-b_{i'}-b_{j'}$ for some $(i,j)\simeq (i',j')$.
    Using an entirely similar argument to the proof of \Cref{lem:s-is-big}, we discover that
    \begin{align}
    \label{eq:eps=is=small}
        \eps_r < 2\bigl(4^{\ell k}\bigr)\text{ for all }r\in [t].
    \end{align}

    We now construct a $t\times t$ square matrix $T$ based on $N$.
    If $N$ has fewer than $t$ rows, then append additional all-zero rows to the end of $N$ to form the square matrix $T$.
    If $N$ has more than $t$ rows then, possibly reordering rows, we may assume that all rows of $N$ after the first $t$ are in the linear span (over $\RR$) of the first $t$ rows.
    Then we may let $T$ consist of the first $t$ rows of $N$.
    
    In either case, note that $Tb=\eps'$, where each component of $\eps'$ equal to either zero or some component of $\eps$; in particular, $\norm{\eps'}_\infty\leq\norm{\eps}_\infty$.

    If $T$ is the zero matrix, then we can let $a=b$.
    Otherwise, if $\lambda$ is the minimal nonzero eigenvalue of $T$, then we know by projecting $b$ onto the null space of $T$ that we can find some $a$ with $Ta=0$ and \begin{align*}
        \norm{a-b}_\infty \leq \abs{\lambda}^{-1}\norm{\eps'}_\infty.
    \end{align*}
    Next, note that the product of the nonzero eigenvalues of an integer matrix must itself be a nonzero integer (as it appears as a coefficient of the integer polynomial $p(x)=\det(T-x I)$), and hence is at least 1 in absolute value.
    Then, noting that the sum of the absolute value of the coefficients in any row of $N$ is 4, we see that 4 is an upper bound on the absolute value of any eigenvalue of $T$.
    Thus $4^{t-1}\abs{\lambda}\geq 1$, and so $\abs{\lambda} \geq 4^{1-t}$.
    Putting this all together, we discover the following.
    \begin{align*}
        \norm{a-b}_\infty &\leq \abs{\lambda}^{-1}(2(4^{\ell k})) \qquad \text{ by \eqref{eq:eps=is=small}} \\
        & \leq 4^t 4^{\ell k}/2 \\
        & < 4^{(\ell + 1)k}/8, \qquad \text{ as } k\geq 4\text{ and }k> t.
    \end{align*}
    Now we may further note that inequality \eqref{eq:distance-lower-bound} gives 
    \begin{align*}
        4^{(\ell + 1)k}/2 &< \min\set{d(A_i,A_{i+1})\st 1\leq i <t}/2 \\
        &\leq \min\set{b_i-b_{i+1}\st 1\leq i < t}/2.
    \end{align*}
    and so, as the sequence $b_i$ is strictly decreasing, we find that $a_i$ is strictly decreasing too, as required.
\end{proof}


\subsection{Deriving a contradiction}
\label{subsec:contradiction}

We now combine results from previous subsections together with a counting argument to produce a contradiction.
To begin, we make several definitions, each holding for all $i,j\in[t]$.

Define $S$ to be the set of ordered pairs of ordered edges in the matching $M$, up to flipping both edges.
$y_{i,j}$ is the number of swaps replacing some pair of edges in $M$ with a pair of edges with colours in $A_i$ and $A_j$.
\begin{align*}
    S&\defined\set{(uv,xy),(uv,yx),(xy,uv),(yx,uv)\st uv,xy\in M},\\
    y_{i,j}&\defined\abs{\set{(uv,xy)\in S \st \ga(ux)\in A_i,\ga(vy)\in A_j}}.
\end{align*}
Next, $p_i$ is the number of edges in $M$ of colours in $A_i$, and $p_{i,j}$ is a shorthand for the number of swaps wherein the edges removed from $M$ have colours in $A_i$ and $A_j$.
\begin{align*}
    p_i&\defined \sum_{c\in A_i} m_c(M),\\
    p_{i,j}&\defined\begin{cases}
        2p_ip_j & i\neq j\\
        2p_i(p_i-1) & i=j.
    \end{cases}
\end{align*}
Finally, $z_{i,j}$ measures the difference between $y_{i,j}$ and $p_{i,j}$; how many swaps start with colours in $A_i$ and $A_j$ against how many end there.
\begin{align*}
    z_{i,j}&\defined y_{i,j}-p_{i,j},\\
    \xi_i &\defined \sum_{j=1}^t z_{i,j}.
\end{align*}

Note that both $y_{i,j}$ and $p_{i,j}$ count pairs of edges in the matching $M$; equality \eqref{eq:y-and-p-sums} follows immediately from the definitions.
\begin{align}
\label{eq:y-and-p-sums}
    \sum_{i=1}^t\sum_{j=1}^t y_{i,j} = \sum_{i=1}^t\sum_{j=1}^t p_{i,j} = \abs{S} = 4\binom{\abs{M}}{2}.
\end{align}

And thus we also have the following.
\begin{align}
\label{eq:zero-sums}
    \sum_{i=1}^t\sum_{j=1}^t z_{i,j} = \sum_{i=1}^t \xi_i = 0.
\end{align}

Given the above, we now proceed to state and prove a claim concerning $z_{i,j}$.
\begin{claim}
\label{claim:positive-z-sums}
    For any $h\leq s$, the following holds.
    \begin{align*}
        \sum_{u=1}^h\sum_{(i,j)\in B_q}z_{i,j} \geq 0.
    \end{align*}
\end{claim}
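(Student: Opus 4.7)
The plan is to reinterpret the cumulative sum as a difference of two swap-counts and then invoke \Cref{obs:no-down-arrows}. Let $T_h\defined \bigcup_{q=1}^h B_q\sseq [t]\times[t]$, and define
\begin{align*}
    \cY_h &\defined \set{(uv,xy)\in S\st (\ga(ux),\ga(vy))\in T_h},\\
    \cP_h &\defined \set{(uv,xy)\in S\st (\ga(uv),\ga(xy))\in T_h}.
\end{align*}
I would first verify, by unfolding the definitions of $y_{i,j}$ and $p_{i,j}$ (splitting into the cases $i=j$ and $i\neq j$), that $\sum_{(i,j)\in B_q} y_{i,j}$ counts those $(uv,xy)\in S$ whose ending pair $(\ga(ux),\ga(vy))$ lies in $B_q$, and that $\sum_{(i,j)\in B_q} p_{i,j}$ counts those whose starting pair $(\ga(uv),\ga(xy))$ lies in $B_q$. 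Summing over $q\leq h$ then gives
\begin{align*}
    \sum_{q=1}^h\sum_{(i,j)\in B_q}z_{i,j} = \abs{\cY_h}-\abs{\cP_h}.
\end{align*}

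The crux of the argument is the set containment $\cP_h\sseq\cY_h$. Suppose $(uv,xy)\in\cP_h$ with $(\ga(uv),\ga(xy))\in B_q$ for some $q\leq h$, and write $(\ga(ux),\ga(vy))\in B_r$. If $r>h$ then $q<r$, so \Cref{obs:no-down-arrows} tells us that the swap $\set{uv,xy}\to\set{ux,vy}$ produces a matching $M'$ with $g(M')<g(M)$, contradicting the minimality of $M$. Hence $r\leq h$, placing $(uv,xy)\in\cY_h$. Taking cardinalities yields $\abs{\cP_h}\leq\abs{\cY_h}$, which is exactly the claim.

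I do not foresee any serious obstacle. The argument is essentially a \emph{conservation-of-flow} observation in the partial order: by minimality of $g(M)$, no swap may move from a higher $B$-class (smaller $q$) to a strictly lower one (larger $r$), so any upward-closed region $T_h$ receives at least as many swap-endings as it loses swap-beginnings. The only technicalities are the case analysis needed to identify $p_{i,j}$ and $y_{i,j}$ with the $S$-counts above (routine, and requiring one to note that $(i,j)$ and $(j,i)$ lie in the same $\simeq$-class, so reordering the components of a starting or ending pair does not leave $T_h$), and the observation that every $(uv,xy)\in S$ gives rise to an honest perfect-matching swap, which holds because $uv$ and $xy$ are disjoint edges of $M$ and hence $u,v,x,y$ are distinct.
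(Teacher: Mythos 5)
Your proof is correct and takes essentially the same approach as the paper: both interpret $\sum_{q\le h}\sum_{(i,j)\in B_q}z_{i,j}$ as (swaps ending in $T_h$) minus (swaps starting in $T_h$) and invoke \Cref{obs:no-down-arrows} to rule out any swap from $T_h$ to its complement. The only cosmetic difference is that the paper runs a contradiction ("if the sum were negative, some swap must leave $T_h$") whereas you prove the containment $\cP_h\sseq\cY_h$ outright, which is a marginally cleaner way of saying the same thing.
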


\begin{proof}
    Assume for contradiction that for some $h$ the claimed inequality did not hold. 
    Unwrapping the definitions of $z_{i,j}$ and $y_{i,j}$, this would imply the following.
    \begin{align}
    \label{eq:deduce-contradicting-swap}
        \sum_{q=1}^h\sum_{(i,j)\in B_q}\abs{\set{(uv,xy)\in S \st \set{c(ux),c(vy)}=\set{i,j}}} <  \sum_{q=1}^h\sum_{(i,j)\in B_q}p_{i,j}.
    \end{align}
    Both sides of \eqref{eq:deduce-contradicting-swap} count elements of $S$.
    The RHS counts the number of ordered pairs of ordered edges $(uv,xy)\in S$ with colours $(c(uv),c(xy))\in B_1\union\dots\union B_h$, i.e. the number of swaps \emph{from} a pair of colours in $B_1\union\dots\union B_h$.
    The LHS of \eqref{eq:deduce-contradicting-swap} counts the number of such pairs for which $(c(ux),c(vy))\in B_1\union\dots\union B_h$, i.e. the number of swaps \emph{to} a pair of colours in $B_1\union\dots\union B_h$.
    Any swap from a pair of colours in $B_1\union\dots\union B_h$ to a pair of colours not in $B_1\union\dots\union B_h$ is necessarily contradicting, but \eqref{eq:deduce-contradicting-swap} tells us precisely that there are more of the former than the latter.
    
    Thus the inequality in \eqref{eq:deduce-contradicting-swap} implies that there must be a contradicting swap, a contradiction, and so the claim holds.
\end{proof}

We now manipulate the definition of $\xi_i$; our goal here is to show that $\xi_i/\abs{A_i}$ is an increasing function of $i$, from which we will derive our final contradiction. 
For the sake of intuition, not much is lost by considering the case when all $\abs{A_i}=1$, and this may ease understanding.

Indeed, in the case when all $A_i$ have size 1, $\sum_{j=1}^t y_{i,j}$ counts the number of edges of colour $i$ in $E\setminus M$, and $\sum_{j=1}^t p_{i,j}$ counts (a constant multiple of) the number of edges of colour $i$ in $M$, from which it is clear that $\xi_i$ should be increasing.
We now prove this generally.
We first deal with sums of $y_{i,j}$ and $p_{i,j}$.
\begin{align}
    \sum_{j=1}^t y_{i,j} &= \abs{\set{e\in E\setminus M\st \ga(e)=i}}=\abs{A_i}n(2nk-1)-p_i. 
\label{eq:y-partial-sum} \\
    \sum_{j=1}^t p_{i,j} &= \Bigl(\sum_{j=1}^t 2p_ip_j \Bigr) - 2p_i = 2p_i(nk-1). \label{eq:p-partial-sum}
\end{align}
Now we turn our attention to $\xi_i$.
\begin{align*}
    \frac{\xi_i}{\abs{A_i}} &= \sum_{j=1}^t \frac{z_{i,j}}{\abs{A_i}} \\
    &= \frac{1}{\abs{A_i}} \Bigl( \sum_{j=1}^t y_{i,j} - \sum_{j=1}^t p_{i,j} \Bigr) \\
    &= \frac{1}{\abs{A_i}}\bigl(\abs{A_i}n(2nk-1)-p_i - 2p_i(nk-1)\bigr) \qquad \text{ by \eqref{eq:y-partial-sum} and \eqref{eq:p-partial-sum}} \\
    &= n(2kn-1)-\frac{p_i(2kn-3)}{\abs{A_i}}.
\end{align*}

Noting that $p_i/\abs{A_i}$ is a strictly decreasing sequence, we see that $\xi_i/\abs{A_i}$ is strictly increasing.
To complete our proof, we consider the following value.
\begin{align*}
    \phi\defined\sum_{i=1}^t a_i\xi_i = \sum_{i=1}^t a_i\abs{A_i}\frac{\xi_i}{\abs{A_i}}.
\end{align*}

Define $\nu_i$ to be the sequence formed by $\abs{A_1}$ copies of $\xi_1/\abs{A_1}$, then $\abs{A_2}$ copies of $\xi_2/\abs{A_2}$, and so on.
Thus we may re-write $\phi$ as follows.
\begin{align*}
    \phi = \sum_{i=1}^k a_i\nu_i.
\end{align*}
Then we know that $a_i$ is a strictly decreasing sequence, $\nu_i$ is an weakly-increasing not-entirely-constant sequence with zero sum (due to \eqref{eq:zero-sums} and that $\xi_i/\abs{A_i}$ is strictly increasing), and so we see that $\phi<0$.
To reach our contradiction, we now prove the following claim.

\begin{claim}
    $\phi\geq 0$.
\end{claim}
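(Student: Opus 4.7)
The plan is to apply summation by parts to $\phi$ organised by the $\simeq$-classes $B_1,\dotsc,B_s$, reducing the problem to showing that the class sums $\sigma_q$ are monotone in the $\succ$-order.

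First I would symmetrise. By the symmetry of the swap construction in the roles of first and second new edge, both the row and column sums of $(z_{i,j})$ equal $\xi_i$; in particular $\sum_j y_{i,j}=\sum_j y_{j,i}$ (both count the non-matching edges of colour in $A_i$, as in \eqref{eq:y-partial-sum}) and $p_{i,j}=p_{j,i}$ is immediate. Hence
\[
  \phi = \sum_i a_i\xi_i = \sum_{i,j} a_i z_{i,j} = \sum_{i,j} a_j z_{i,j} = \tfrac12\sum_{i,j}(a_i+a_j)z_{i,j}.
\]
The Approximation Lemma of \Cref{subsec:approximation-by-reals} makes the inner factor constant on each class $B_q$; writing $\sigma_q\defined a_i+a_j$ for any $(i,j)\in B_q$ and $T_q\defined\sum_{(i,j)\in B_q}z_{i,j}$, this becomes
\[
  \phi=\tfrac12\sum_{q=1}^s\sigma_q T_q.
\]

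Next I would apply Abel summation. By \Cref{claim:positive-z-sums} the partial sums $U_h\defined\sum_{q\leq h}T_q$ are non-negative, and \eqref{eq:zero-sums} gives $U_s=0$, so
\[
  \phi = \tfrac12\sum_{q=1}^{s-1}(\sigma_q-\sigma_{q+1})U_q.
\]
Thus $\phi\geq 0$ reduces to the monotonicity $\sigma_q\geq\sigma_{q+1}$ for every $q$. To establish this monotonicity I would use that $B_q\succ B_{q+1}$, which by definition of $\succ$ forces $b_i+b_j>b_{i'}+b_{j'}+4$ for any $(i,j)\in B_q$ and $(i',j')\in B_{q+1}$, and then transfer this to $\sigma_q$ versus $\sigma_{q+1}$ via the bound $\norm{a-b}_\infty<4^{(\ell+1)k}/8$ together with the separation \eqref{eq:distance-lower-bound} between the different $A_i$'s.

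The main obstacle will be precisely this last step. A direct combination of the two bounds gives only $\sigma_q-\sigma_{q+1}>4-4^{(\ell+1)k}/2$, which is typically negative, so the bare $\succ$-gap of $4$ does not suffice. Overcoming this will require exploiting both the within-class clustering of $b$-sums (via the chain bound $\eps_r<2\cdot 4^{\ell k}$ used in the Approximation Lemma's proof) and the fact that between consecutive $\simeq$-classes the $b$-sums actually separate by a term of size at least \eqref{eq:distance-lower-bound}, in the same spirit as the final argument in the Approximation Lemma's proof that deduced the strict ordering $a_1>\dotsb>a_t$.
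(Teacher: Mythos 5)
Your symmetrisation $\phi=\tfrac12\sum_{i,j}(a_i+a_j)z_{i,j}$ followed by Abel summation is a clean reformulation of the paper's argument, and the two are mathematically equivalent: the paper instead builds $z$ up from the zero matrix by a sequence of symmetric ``weight-moves'' across classes, and \Cref{claim:positive-z-sums} guarantees exactly the non-negativity of the Abel partial sums $U_h$ that you use (one should note that in the paper's bullet-point description the direction of the move and of the inequality $a_i+a_j\leq a_{i'}+a_{j'}$ are both written the wrong way round, but the two sign errors cancel). In both formulations the entire claim reduces to the monotonicity $\sigma_q\geq\sigma_{q+1}$, where $\sigma_q$ is the common value of $a_i+a_j$ on $B_q$; the paper asserts this (``as $q\leq r$, we know that\dots''), and you correctly isolate it as the one remaining step and correctly observe that the crude estimate $\sigma_q-\sigma_{q+1}>4-4^{(\ell+1)k}/2$ is vacuous.

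Where your proposal falls short is the final speculative paragraph. The fix you sketch relies on ``the fact that between consecutive $\simeq$-classes the $b$-sums actually separate by a term of size at least \eqref{eq:distance-lower-bound}.'' That is not a fact: \eqref{eq:distance-lower-bound} separates adjacent $A_i$'s, i.e.\ the individual $b_i$'s, by $4^{(\ell+1)k}$, but two pairs $(i,j)\succ(i',j')$ lying in different $\simeq$-classes are only guaranteed to have $b$-sums differing by a little more than $4$ plus the relevant widths, which is tiny compared with $\norm{a-b}_\infty$. (For instance with $t=4$ and negligible widths one can have $b_1+b_4$ and $b_2+b_3$ differing by $5$ yet lying in different classes.) So the transfer from $b$-sums to $a$-sums cannot be carried out the way you suggest, and the monotonicity $\sigma_q\geq\sigma_{q+1}$ needs a genuinely different justification — one that the Lemma of \Cref{subsec:approximation-by-reals} does not literally supply either, since it gives only that distinct $\simeq$-classes have distinct $\sigma$-values and that the individual $a_i$ are decreasing, not that the $\sigma_q$ respect the $\succ$-ordering of the classes. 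Your Abel-summation presentation is therefore useful precisely because it makes visible a step the paper treats as immediate, but as written the proposal does not close it.
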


\begin{proof}
    Consider the weaker situation where instead the numbers $z_{i,j}$ merely form some zero-sum symmetric matrix satisfying the conclusion of \Cref{claim:positive-z-sums}; we will show that this is already enough to deduce that $\phi\geq 0$.
    In the case when all $z_{i,j}=0$, we have $\phi=0$.

    Now, any other valid value of $z_{i,j}$ can be formed by starting in the case $z_{i,j}=0$, and then performing the following sequence of operations a number of times.
    \begin{itemize}
        \item Select $i\leq j,i'\leq j'$ with $(i,j)\in B_q$ and $(i',j')\in B_r$ for some $q\leq r$.
        \item Select some weight $\zeta> 0$, and replace $z_{i,j}$ with $z_{i,j}-\zeta$, and $z_{i',j'}$ with $z_{i',j'}+\zeta$.
        \item Similarly, move $\zeta$ weight from $z_{j,i}$ to $z_{j',i'}$ to preserve symmetry.
    \end{itemize}
    This sequence of operations decreases $\xi_i$ and $\xi_j$ by $\zeta$ and increases $\xi_{i'}$ and $\xi_{j'}$ by $\zeta$.
    But as $q\leq r$, we know that $a_i+a_j\leq a_{i'}+a_{j'}$, so the above increases (or keeps constant) the value of $\phi$.
    Thus $\phi\geq 0$, as required.
\end{proof}

This contradiction completes our proof of \Cref{thm:main}.


\section{Conclusion and future work}
\label{sec:conclusion}

We have removed the $n$ dependence from \Cref{thm:pardey} to prove \Cref{thm:main}, and thus shown that for a fixed number of colours, an arbitrarily large colour-balanced clique contains a perfect matching only a constant number of edges away from being colour balanced itself.
However, there is still a significant gap in the $k$-dependence between our result and the bound of \Cref{conj:pardey}.
The techniques presented here seem far from reaching a quadratic dependence on $k$, and so little effort has been expended in optimising the bound produced.
That said, the author believes that it would be interesting to attempt to improve the upper bound, and investigate whether quadratic dependence on $k$ is truly the correct answer.

One particular limitation of the techniques explored here is that they will only find a matching which locally minimises the function $g(M)$, and cannot directly say anything about the globally optimal matching.
Based on this observation, we ask the following question.

\begin{question}
    What is the largest value of $f(M)$ in terms of $k$ for which there exists a colouring of $K_{2kn}$ and a perfect matching $M$ for which any small perturbation of $M$ (i.e. performing a swap) increases the value of $f(M)$? 
    In particular, can this be significantly larger than the global minimum of $f$ over all perfect matchings $M$?
\end{question}

In terms of lower bounds, we have not discovered any instance of a colour-balanced clique $K$ with 
\begin{align*}
    \min\set{f(M)\st M\text{ a perfect matching of }K}>2.
\end{align*}
However, it is not too hard to see that there are balanced colourings and perfect matchings $M$ forming local minima for $g$ for which $f(M)=\Omega(k^2)$.
It is for this reason that we believe it is plausible that the cases of local and global minima are significantly different, in which case a new approach would be necessary in order to get close to the true bound.

A different direction to extend this work would be to consider almost colour-balanced $H$-factors; here we have dealt with the case $H=K_2$.
The natural extension in this direction is, in the author's opinion, the following question.

\begin{question}
    For which graphs $H$ on $r$ vertices and $m$ edges is it there a function $h_H\from \ZZ\to\ZZ$ with the following property?
    
    For any $n$ and any number $k$ of colours, any colour-balanced $c\from K_{rnk}\to [k]$ contains an $H$-factor $F$ satisfying
    \begin{align*}
        f(F)\leq h_H(k),
    \end{align*}
    where $f(F)\defined \sum_{i=1}^k\abs{\abs{c^{-1}(i)\inter F}-nm}$ measures how far $F$ is from being colour-balanced.
\end{question}

It would appear that approaching the above question requires ideas beyond those presented in this paper.


\section{Acknowledgement}

The author is grateful to have been supported by the Internal Graduate Studentship of Trinity College, Cambridge.


\bibliographystyle{abbrvnat}  
\renewcommand{\bibname}{Bibliography}
\bibliography{main}


\end{document}